\begin{document}
\vspace{250 px}
\title{Quantification of Differential Information using Matrix Pencil}

\author{\IEEEauthorblockN{Snigdha Bhagat\IEEEauthorrefmark{1}, 
S.D.Joshi\IEEEauthorrefmark{2}
}
\IEEEauthorblockA{\IEEEauthorrefmark{1}Indian Institute of Technology, Delhi, India}
}

\maketitle 
\IEEEdisplaynontitleabstractindextext
\IEEEpeerreviewmaketitle
\newtheorem{theorem}{Theorem}%
\AtEndEnvironment{theorem}{\null\hfill\qedsymbol}%
\newcommand{\e}[1]{{\mathbb E}\left[ #1 \right]}
\newcommand*{\QEDA}{\hfill\ensuremath{\blacksquare}}%
\section{abstract}

Any traditional classification problem in general involves modelling individual classes and in turn classification by evaluating the similarity of the test set with the modelled classes. In this paper, we introduce another approach that would find the differential information between two classes rather than modelling individual classes separately. The classes are viewed on a common frame of reference in which one class would have a constant variance, unlike the other class which would have unequal variance along its basis vectors which would capture the differential information of one class over the other.This, when mathematically formulated, leads to the solution of Matrix Pencil equation.The theory of binary classification was extended to a multi-class scenario.This is borne out  by illustrative examples on the classification of the MNIST database.

\section{Introduction}
In general, a classification problem can be solved by modelling individual classes or extracting different kinds of features by analysing the statistical properties of the sample points.  The feature set of individual classes would either form a cluster or adhere to a certain kind of distribution.  The classification of the test set can then be carried out in accordance to its relationship with the feature set of individual classes which can be computed using an appropriate loss function. In this paper, we have attempted to discern one pattern from the other by finding the differential information instead of modelling individual classes. Classification Problem has been solved using several techniques,  what makes Matrix Pencil unique is its ability to discern one pattern, class or process from another by accentuating the differential part. In a binary classification problem both the classes are viewed on a common frame of reference, such that one class would have constant power along all the directions and the other would have unequal power along the modified basis.This unequal power would serve as a metric for quantifying the differential information of one class over the other. This, when formulated, led to the solution of the matrix pencil equation. The motivation of this work came from the analysis of characteristic equation of eigen decomposition $det(A-\lambda I) = 0$ which would in a sense view the information of a certain class represented by A with respect to the standard basis.  The characteristic equation of matrix pencil $det(A-\lambda B) = 0$, on the other hand, can be considered as viewing the information of one class over the other where A and B can be considered as the covariance matrices of the two classes $C_{1}$ and $C_{2}$ respectively.
\par Matrix Pencil has a variety of applications in several other domains some of which are listed below. It is an eigenvalue based approach which is used for estimation of the frequencies, damping factors and other parameters of the exponentially damped/undamped sinusoids. Other applications include Radar Target Classification \cite{5} which models the complex electromagnetic field when a target is incident by an Ultra-Wideband Signal by extraction of features using Matrix Pencil in the time domain and frequency domain.  The features are a function of the natural response of the system which is modelled by the matrix pencil equation.   It has been used in quantum entanglement classification due to its ability to bring about SLOCC equivalence for $2 \bigotimes m \bigotimes n$ systems.  Two states are said to be SLOCC equivalent if they can be transformed into one another with a non zero probability using only local quantum resources and classical communication (SLOCC).  Each state in the $2 \bigotimes m \bigotimes n$ system is represented using the matrix pencil operator, states which are related by invertible transforms are said to be equivalent \cite{1}.   It has also been used for the modal analysis of medical percussion signals due to its inherent ability to decompose the exponentially damped/undamped sinusoids in terms of its components since it is lesser sensitive to noise as compared to Fast Fourier transform, Prony's method and other spectral analysis methods \cite{6}.  Fourier series,  which represents the signal as a sum
of infinite periodic harmonics,  is not naturally suited for decomposition
of short and aperiodic percussion signals since damping would broaden the spectral peaks which in turn would mask the peaks with low amplitude.   This issue is addressed by using the characteristic equation of matrix pencil for decomposing the components of exponentially damped sinusoids\cite{2}.  Matrix Pencil has also been used in literature for the estimation of natural response from multiple transient responses recorded from different Look directions which leads to the estimation of SEM (Singularity Expansion Method)
Poles.    Matrix Pencil method has been employed  for the
simultaneous estimation of all the SEM poles from multiple
look directions without averaging the transient responses obtained\cite{3}.   The applications of Matrix Pencil can also be seen in Speech Processing due to its ability to estimate the signal spectrum at a high resolution due to which it has been used in Speech Enhancement, Compression and Pitch estimation.  Recently Matrix Pencil method was used to predict the time domain response of Reverberation Chamber.  The response of the chamber can be modelled as a linear sum of complex damped sinusoids the parameters of which can easily be found out by solving the matrix pencil equation \cite{4}.   The matrix pencil method has also been employed to synthesize the far-field pattern of a sparse
and random antenna array \cite{7}\cite{8}.   The method starts by sampling the desired pattern from the
original periodic antenna array to form a Hankel matrix,  then singular value decomposition
(SVD) of this matrix is employed to obtain a reduced number of antenna elements,  finally, the matrix pencil method is utilized to reconstruct new element positions and excitations.   On the same lines, matrix pencil has been used for the design of 1-Dimensional or 2-Dimensional metalens with meta-atoms randomly distributed.  Matrix Pencil technique assists the designing of the position and phase of meta-atoms for a random metasurface.  The 1-Dimensional metasurface can be regarded as a linear antenna array whose far-field radiation pattern is a sum of complex exponentials.  Matrix Pencil is used to find the location and phase of meta-atoms keeping the far-field radiation into consideration. Despite several applications of matrix pencil as stated above its usage to discern one pattern over the other has never been exploited.This paper highlights the usage of Matrix Pencil to find the relative information of one pattern over the other.

\section {Matrix Pencil for capturing Differential Information}
Let us consider a binary classification problem with classes denoted as $C_{1}$ and $C_{2}$.   Let X and Y be the vector random variables in N-dimensional space of classes $C_{1}$ and $C_{2}$ respectively,  consisting of their observation vectors. Further, let the covariance matrices of the two classes be denoted as B and A respectively as given in equation \ref{equation 1} and \ref{equation 2} below.  
\newline
\begin{equation}
\label{equation 1}
      B  =  \mathop{\mathbb{E}}((\mathbf{X}-\bm{\mu}_{x}){(\mathbf{X}-\bm{\mu}_{x})}^\dagger)
\end{equation}

\begin{equation}
\label{equation 2}
    A  =  \mathop{\mathbb{E}}((\mathbf{Y}-\bm{\mu}_{y}){(\mathbf{Y}-\bm{\mu}_{y})}^\dagger),
\end{equation}
where $\mu_{x}$ and $\mu_{y}$ are the mean vectors of random vector variables X and Y respectively, $\mathop{\mathbb{E}}$ denotes the expectation operator and the symbol ' $ \dagger$ ' denotes the conjugate transpose of a matrix/vector. We begin with a theorem that would be useful in the latter part of the paper. Without loss of generality, we can assume $\mu_{x}$ and $\mu_{y}$ to be zero vectors thus orthogonality and uncorrelatedness have been used interchangeably.
\begin{theorem}\renewcommand{\qedsymbol}{}
 Let \{\(v_{1}, v_{2}.  .  .  .  .  v_{n}\)\} be any set of n Linearly Independent vectors in $k^n$, where k = $\mathbb{R}$ or $\mathbb{C}$ that is the field of real and complex numbers.  Then there always exists an inner product in which the $\{{v_{i}'s,  1 \leq i \leq n}\}$ are orthogonal/orthonormal.  
\end{theorem}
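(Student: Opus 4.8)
The plan is to realize the desired inner product explicitly as a positive-definite Hermitian form, using the linear independence of the $v_i$ only to guarantee positivity. Recall that every inner product on $k^n$ can be written as $\langle x, y\rangle_M = x^\dagger M y$ for a Hermitian (symmetric, in the real case) positive-definite matrix $M$, and conversely that any such $M$ defines a bona fide inner product. Constructing the inner product therefore reduces to producing one positive-definite $M$ with the required orthogonality property.

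First I would collect the vectors as columns of a matrix $V = [\,v_1\;v_2\;\cdots\;v_n\,]$. Linear independence of the $v_i$ in the $n$-dimensional space $k^n$ says precisely that $V$ is invertible. The orthonormality requirement $\langle v_i, v_j\rangle_M = \delta_{ij}$ for all $i,j$ is, in compact matrix form, the single equation $V^\dagger M V = I$. Solving this formally for $M$ gives the candidate $M = (V^\dagger)^{-1} V^{-1} = (V V^\dagger)^{-1}$, which is well defined because $V$ is invertible.

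The step I regard as the real content is verifying that this candidate is a legitimate inner product, that is, that $M$ is Hermitian and positive definite rather than merely a matrix formally satisfying the orthogonality relations. Hermiticity is immediate from $(VV^\dagger)^\dagger = VV^\dagger$ together with the fact that the inverse of a Hermitian matrix is Hermitian. For positive definiteness I would observe that for any nonzero $x$ we have $x^\dagger (VV^\dagger) x = \|V^\dagger x\|^2 > 0$, using invertibility of $V$ so that $V^\dagger x \neq 0$; hence $VV^\dagger$ is positive definite and so is its inverse $M$. With $M$ confirmed positive definite, $\langle\cdot,\cdot\rangle_M$ is a genuine inner product, and the identity $V^\dagger M V = I$ says exactly that the $v_i$ are orthonormal in it. Relaxing the normalization, so that $V^\dagger M V$ is merely a positive diagonal matrix, produces orthogonal (but not necessarily unit) families and covers that clause of the statement.

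An equivalent, coordinate-free way to organize the same argument, which I would add as a remark, is simply to declare the basis $\{v_i\}$ to be orthonormal and extend sesquilinearly: setting $\langle \sum_i c_i v_i, \sum_j d_j v_j\rangle := \sum_i \overline{c_i}\, d_i$ automatically yields a positive-definite form, since $\langle x,x\rangle = \sum_i |c_i|^2$ vanishes only when every coordinate $c_i$ vanishes, i.e. only when $x=0$. The matrix computation above is just this construction expressed in the standard basis, with $M$ the Gram matrix induced by the change of coordinates.
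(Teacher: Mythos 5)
Your proposal is correct and follows essentially the same route as the paper: the paper defines the operator $P$ by $Pv_i = e_i$ (which is exactly your $V^{-1}$) and takes the weight matrix $B = P^{*}P = (VV^{\dagger})^{-1}$, identical to your $M$. The only difference is that you explicitly verify Hermiticity and positive definiteness of the weight matrix, which the paper merely asserts, so your write-up is if anything slightly more complete.
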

\begin{proof}
Let us define an linear operator P such that $P v_{i} = e_{i} $ and $\{{e_{i},  1 \leq i \leq n}\}$ is the standard orthonormal basis.   Since  $\{{e_{i},  1 \leq i \leq n}\}$ is an orthonormal set of vectors with respect to standard inner product i.  e.   $ \langle x, y \rangle = \sum\limits_{n} x_{i} y_{i}^{*} $. Thus the proof of the theorem can be stated as below:
\begin{equation}
\label{equation 3}
\begin{split}
 \langle e_{i}, e_{j}\rangle &= \delta_{i, j} \\
\langle e_{i}, e_{j}\rangle &= \langle Pv_{i}, Pv_{j}\rangle \\
\end{split}
\end{equation}
\begin{equation}
\label{equation 4}
\begin{aligned}
\langle Pv_{i}, Pv_{j}\rangle  &= \langle v_{i}, P^{*} P v_{j}\rangle\\
 &= \langle v_{i}, B v_{j}\rangle \\
 &= \langle v_{i},  v_{j}\rangle _{B} \\
 \end{aligned}
\end{equation}
Thus we can say
\begin{equation}
\label{equation 5}
\langle Pv_{i}, Pv_{j}\rangle  = \delta_{i, j}
\end{equation}
Here $P^{*}$ denotes the adjoint of operator P.   Since P is a transformation which maps one basis to another so it is a vector space isomorphism and it would be invertible.   Also $B = P^{*}P$ would be a Hermitian and positive definite matrix which can also be used as the weight matrix and inner product can now be defined as $ \langle x, y \rangle_{B} =  \langle x, By \rangle$, where the inner product on the right-hand side is the standard inner product.  
\end{proof}
We now focus on finding the eigen decomposition of the covariance matrix obtained in equation 1 and 2 which is also the Karhunen Loeve basis. This will be the orthonormal basis in which the coefficients of respective classes would be uncorrelated. The random variable X corresponding to class $C_{1}$ can be represented in terms of eigen vector as given by the below stated equation.  
\begin{equation}
\label{equation 6}
X = \sum\limits_{k} a_k\varphi_k    
\end{equation}
Where $a_{k} = \langle X, \phi_{k} \rangle $ and  $\{{a_{k},  1 \leq k \leq n}\}$ are uncorrelated.  
Rewriting the above equation using the eigen values ($ \lambda_k $) of X we get:
\begin{equation}
\label{equation 7}
X = \sum\limits_{k} (\frac{a_k}{\sqrt{\lambda_k}})(\sqrt{\lambda_k}\varphi_k)
\end{equation}

The modified basis and their corresponding coefficients can be termed as scaled coefficient  $ \tilde a_k =  \frac{a_k}{\sqrt{\lambda_k}} $  {and scaled basis  } $u_k = \sqrt{\lambda_k}\varphi_k  $.   The covariance matrix A and B are positive definite matrices and so all the eigen values would be positive i.e. $\{{\lambda_{k} > 0 ,  1 \leq k \leq n}\}$ . Rewriting equation \ref{equation 7} in terms of the modified basis leads to the following representation of X.  
\begin{equation}
\label{equation 8}
X = \sum\limits_{k} {\tilde a_k} u_k
\end{equation}
It is observed that in the modified representation of X the basis is orthogonal but not orthonormal and the variance of all the projection coefficients $\{{\tilde a_{k},  1 \leq k \leq n}\}$ is the same in all directions and is equal to unity which implies that X will have equal power along the basis $\{{u_{k},  1 \leq k \leq n}\}$. Thus, for any vector random variable, there will always exist a basis such that the projection coefficients would have equal variance along all the directions and the basis need not be orthonormal. Since equation \ref{equation 6} is the solution of Karhunen Loeve Transform (KLT) we know that  $\mathop{\mathbb{E}}[a_{k}\overline a_{j}] = \lambda_{k}\delta_{k, j}$.  The variance of the scaled coefficient  $\{ {\tilde a_k ,  1 \leq k \leq n}\}$ can be written as below:
\begin{equation}
\label{equation 9}
\begin{aligned}
\mathop{\mathbb{E}}[\tilde a_{k} {\tilde a_{k}}^{*}] &= \mathop{\mathbb{E}}[\frac{a_k}{\sqrt{\lambda_k}}][\frac{{a_k}^{*}}{\sqrt{\lambda_k}}]  \\
\mathop{\mathbb{E}}[\tilde a_{k} {\tilde a_{k}}^{*}] &= \frac{1}{\lambda_k} \mathop{\mathbb{E}}[a_{k}{a_k^{*}}] \\
\mathop{\mathbb{E}}[\tilde a_{k} {\tilde a_{k}}^{*}] &= \frac{1}{\lambda_k} \lambda_{k}\delta_{k} \\
\mathop{\mathbb{E}}[\tilde a_{k} {\tilde a_{k}}^{*}] &= \delta_{k} = 1
\end{aligned}
\end{equation}
The above discussion shows that for any random vector we can always define a basis and an inner product such that the same vector when represented in modified basis would have equal variance of projection coefficients $\{{\tilde a_{k},  1 \leq k \leq n}\}$ and an orthonormal basis with respect to the modified inner product.  In order to obtain such orthonormal basis for the orthogonal basis$\{{u_{k},  1 \leq k \leq n}\}$ we introduce the operator L such that:
\begin{equation}
\label{equation 10}
L\sqrt{\lambda_k}\varphi_k  = e_k,
\end{equation} 
where $\{{e_{k},  1 \leq k \leq n}\}$ are the standard basis vectors and $\lambda_k$'s are the eigen values corresponding to eigen vectors $\varphi_k$.  
Now consider $LX$ which is the transformed version of X such that its projection coefficients have equal variance in all the directions and the basis is standard orthonormal eigen basis $e_{k} $.Applying the operator L on both sides of \ref{equation 8} we get:  
\begin{equation}
\label{equation 11}
\begin{aligned}
&LX =   \sum\limits_{k}  \tilde a_k (L u_k), \\
&LX = \sum\limits_{k}  \tilde a_k (e_k),
\end{aligned}
\end{equation}
where $\{{e_{k},  1 \leq k \leq n}\}$ is the standard basis.  
Using equation \ref{equation 10},the operator matrix L can be obtained as follows:
\begin{equation}
\label{equation 12}
L = \begin{bmatrix}\lambda_{1}^{-1/2} & & \\ & \ddots & \\ & & \lambda_{n}^{-1/2} \end{bmatrix} \begin{bmatrix}\varphi_{1}^{\dagger} \\ \varphi_{2}^{\dagger} \\ \vdots \\ \varphi_{n}^{\dagger} \end{bmatrix}
\end{equation}

\begin{equation}
\label{equation 13}
L = \Lambda^ {-1/2} \varphi^{\dagger}
\end{equation}
The operator matrix L, derived from class $C_{1}$, would perform the operation of whitening the signal X such that the power is equally distributed in all the directions. The feature set of class $C_{1}$ would now be whitened and have equal variance of projection coefficients in the scaled eigen basis frame of reference. The feature set of class $C_{2}$ when operated by the same operator matrix i.e. $LY$ would have an unequal variance of projection coefficients in this scaled eigen basis. Thus viewing the feature vector of both the classes on the same frame of reference would enable us to encapsulate the differential information of class $C_{2}$ with respect to class $C_{1}$ in the form of unequal projection coefficients over the uniformly distributed projection coefficients of class $C_{1}$. 

As stated in Theorem 1,for any set of linearly independent vectors there would always exist an inner product in which the vectors would be orthogonal/orthonormal.  As evident in equation \ref{equation 8}, for the linearly independent vectors $\{{u_{k},  1 \leq k \leq n}\}$, there would exist a modified inner product in which they are orthonormal. The weight matrix of this modified inner product is obtained in equation \ref{equation 17} below.
\begin{equation}
\label{equation 14}
\begin{split}
 \langle e_{i}, e_{j}\rangle = \delta_{i, j} \\
\langle e_{i}, e_{j}\rangle = \langle Lu_{i}, Lu_{j}\rangle \\
\end{split}
\end{equation}
\begin{equation}
\label{equation 15}
\langle Lu_{i}, Lu_{j}\rangle  = \langle u_{i}, L^{\dagger} L u_{j}\rangle
\end{equation}
The value of $L^{\dagger} L$ is evaluated to find the value of weight matrix.  
\begin{equation}
\label{equation 16}
L^{\dagger} L = \varphi\Lambda^ {-1/2}  \Lambda^ {-1/2} \varphi^{\dagger} = B^{-1}
\end{equation}
\begin{equation}
\label{equation 17}
\begin{aligned}
\langle u_{i}, L^{\dagger} L u_{j}\rangle  &= \langle u_{i}, B^{-1} u_{j}\rangle \\
 \langle e_{i}, e_{j}\rangle &= \langle u_{i}, u_{j}\rangle_{B^{-1}} \\
\end{aligned}
\end{equation}
At this point we can define the weight matrix $W =  B^{-1}$ such that $\langle u_{i}, u_{j}\rangle_{W} = \delta_{i, j} $.   $\{{u_{k},  1 \leq k \leq n}\}$ will be orthonormal and the projection coefficients$\{{\tilde{a_{k}},  1 \leq k \leq n}\}$ will be uncorrelated white under the action of modified inner product.   
\newline
\begin{theorem}\renewcommand{\qedsymbol}{}
Let \(\mathbf{x_{1}, x_{2}.  .  .  .  .  x_{n}}\) be a set of linearly independent random variables such that $X \equiv$ \(\mathbf{[x_{1}, x_{2}.  .  .  .  .  x_{n}]^{T}} \).  Given the KLT representation of X we can obtain a whitening operator L such that $L\sqrt{\lambda_k}\varphi_k  = e_k$ where  $\{{e_{k},  1 \leq k \leq n}\}$  are the standard basis vectors and $LX$ is the whitened version of $X$.  
\end{theorem}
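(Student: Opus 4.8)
The plan is to exhibit the operator $L$ explicitly from the KLT data and then verify, by a direct computation of the covariance of $LX$, that $L$ achieves whitening. Since the linear independence of the $x_k$ guarantees that the covariance matrix $B = \mathbb{E}[XX^\dagger]$ of equation \ref{equation 1} is positive definite, all its eigenvalues $\lambda_k$ are strictly positive, so the scaling factors $\lambda_k^{-1/2}$ appearing in equation \ref{equation 13} are well defined and finite.

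First I would recall the KLT expansion $X = \sum_k a_k \varphi_k$ of equation \ref{equation 6}, in which $\varphi = [\varphi_1, \dots, \varphi_n]$ is the unitary matrix of orthonormal eigenvectors of $B$ and the coefficients satisfy $\mathbb{E}[a_k \bar a_j] = \lambda_k \delta_{k,j}$. Rescaling into the basis $u_k = \sqrt{\lambda_k}\,\varphi_k$ with coefficients $\tilde a_k = a_k/\sqrt{\lambda_k}$ recovers $X = \sum_k \tilde a_k u_k$ as in equation \ref{equation 8}.

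Next I would define $L$ through the requirement $L u_k = e_k$ of equation \ref{equation 10}, which uniquely fixes $L = \Lambda^{-1/2}\varphi^\dagger$ as in equation \ref{equation 13}. This map carries the scaled eigenbasis onto the standard orthonormal basis, and it is invertible because $\varphi$ is unitary and $\Lambda^{-1/2}$ is nonsingular. Applying $L$ term by term to equation \ref{equation 8} gives $LX = \sum_k \tilde a_k e_k$, so that the coordinates of $LX$ in the standard basis are precisely the scaled coefficients $\tilde a_k$.

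The crux is then to confirm that $LX$ is genuinely white, i.e. that $\mathbb{E}[(LX)(LX)^\dagger] = I$. I would verify this at the matrix level by substituting the eigendecomposition $B = \varphi \Lambda \varphi^\dagger$:
\begin{equation}
L B L^\dagger = \Lambda^{-1/2}\varphi^\dagger \big(\varphi \Lambda \varphi^\dagger\big)\varphi\,\Lambda^{-1/2} = \Lambda^{-1/2}\Lambda\,\Lambda^{-1/2} = I,
\end{equation}
which coincides with the coordinate-wise statement $\mathbb{E}[\tilde a_k \tilde a_j^*] = \delta_{k,j}$ already obtained in equation \ref{equation 9}. The only load-bearing hypothesis is the positive definiteness of $B$; everything else is bookkeeping on the unitary matrix $\varphi$. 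Since the construction is essentially carried out already in equations \ref{equation 6}--\ref{equation 13}, the theorem is a consolidation of that derivation, and the main obstacle is the covariance verification rather than any genuine existence difficulty.
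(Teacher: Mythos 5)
Your proposal is correct and follows essentially the same route as the paper: it fixes $L = \Lambda^{-1/2}\varphi^\dagger$ from the requirement $Lu_k = e_k$ and then verifies whitening by computing $L\,\mathbb{E}[XX^\dagger]\,L^\dagger = \Lambda^{-1/2}\varphi^\dagger(\varphi\Lambda\varphi^\dagger)\varphi\Lambda^{-1/2} = I$, which is exactly the computation in equations \ref{equation 18}--\ref{equation 20}. The added remarks on positive definiteness and the coordinate-wise check via equation \ref{equation 9} are consistent supplements, not a different argument.
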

\begin{proof}
In order to prove that LX is the whitened version of X we need to prove that the auto correlation of the signal is identity.   
\begin{equation}
\label{equation 18}
\begin{aligned}
\mathop{\mathbb{E}}[(LX)(LX)^\dagger] &= I \\
\mathop{\mathbb{E}}[(LX)(LX)^\dagger] &= {\mathbb{E}}[LX{X^\dagger}{L^\dagger}] \\
\mathop{\mathbb{E}}[(LX)(LX)^\dagger] &=  {L\mathbb{E}}[X{X^\dagger}]{L^\dagger}\\
\end{aligned}
\end{equation}
The eigen decomposition of the covariance matrix of class $C_{1}$ can be given as $ B = \mathop{\mathbb{E}}(X X^\dagger) = \varphi\Lambda\varphi^{\dagger}$. Substituting the values of L from equation \ref{equation 13} in equation \ref{equation 18}, we obtain -  
\begin{equation}
\label{equation 19}
\begin{aligned}
\mathop{\mathbb{E}}[(LX)(LX)^\dagger] &= L \mathop{\mathbb{E}}(X X^\dagger) (L^\dagger) \\
\mathop{\mathbb{E}}[(LX)(LX)^\dagger] &=  [\Lambda^ {-1/2} \varphi^{\dagger} (\varphi\Lambda\varphi^{\dagger})\varphi(\Lambda)^ {-1/2}] 
\end{aligned}
\end{equation}
Since $[\varphi]$ is an orthonormal basis $   \varphi \varphi^{\dagger} = \varphi^{\dagger}\varphi = I $.  
\begin{equation}
\label{equation 20}
\mathop{\mathbb{E}}[(LX)(LX)^\dagger] = \Lambda^ {-1/2}\Lambda\Lambda^ {-1/2} = I
\end{equation}
\end{proof}
Now we prove a result which is going to play a fundamental role in the theory developed latter.Thais result although well known is proved here in the context of the problem statement.
\begin{theorem}\renewcommand{\qedsymbol}{}
If a random vector Z is i. i. d. white in a particular orthonormal frame of reference then it will be i. i. d. white in any other orthonormal frame of reference.  
\end{theorem}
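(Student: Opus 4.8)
The plan is to reduce the statement to the elementary fact that a change between two orthonormal frames of reference is implemented by a unitary (orthogonal, in the real case) operator, together with the observation that the whiteness condition is precisely the assertion that the autocorrelation matrix is a scalar multiple of the identity. Since a scalar multiple of the identity is invariant under conjugation by a unitary matrix, whiteness will automatically transfer to any other orthonormal frame.

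First I would fix notation in the spirit of the preceding development. Suppose $Z$ is white in an orthonormal frame $\{f_k\}$, meaning its coordinate vector in that frame satisfies $\mathop{\mathbb{E}}[ZZ^{\dagger}] = \sigma^{2} I$ for some $\sigma^{2} > 0$ (the i.i.d.\ white case being $\sigma^{2}=1$). Let $\{g_k\}$ be any other orthonormal frame. Because both frames are orthonormal with respect to the standard inner product, the operator $U$ carrying coordinates in the first frame to coordinates in the second obeys $U^{\dagger}U = UU^{\dagger} = I$; that is, $U$ is unitary. Writing $Z' = UZ$ for the coordinate vector of $Z$ in the new frame, the key step is the single computation
\[
\mathop{\mathbb{E}}[Z'Z'^{\dagger}] = \mathop{\mathbb{E}}[UZZ^{\dagger}U^{\dagger}] = U\,\mathop{\mathbb{E}}[ZZ^{\dagger}]\,U^{\dagger} = U(\sigma^{2}I)U^{\dagger} = \sigma^{2}\,UU^{\dagger} = \sigma^{2}I,
\]
where I have used linearity of expectation to pull the deterministic $U$ and $U^{\dagger}$ outside, and unitarity in the final equality. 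Since the target frame $\{g_k\}$ was arbitrary and the argument invoked only $UU^{\dagger}=I$, the whiteness property persists in every orthonormal frame, which is the assertion to be proved.

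The main obstacle here is conceptual rather than computational: one must be careful about what ``i.i.d.\ white'' is taken to mean. The computation above rigorously establishes invariance of the \emph{second-order} structure (uncorrelatedness with equal variance), and this is genuinely frame-independent. Full statistical \emph{independence} of the components is, however, not preserved by a generic unitary transform, since such a transform linearly mixes the coordinates; independence transfers only under a Gaussian hypothesis, where uncorrelatedness is equivalent to independence. Accordingly, I would either interpret ``i.i.d.\ white'' as the whiteness (uncorrelated, equal-variance) condition that the covariance computation controls, or, if genuine independence is intended, invoke the Gaussianity of $Z$ so that the preserved diagonal covariance $\sigma^{2}I$ forces the transformed components to remain independent. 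Flagging this distinction explicitly is the one place where care is needed; everything else is the one-line conjugation identity.
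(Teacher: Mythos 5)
Your proof is correct and follows essentially the same route as the paper's: both identify the change of orthonormal frame with a unitary operator and observe that conjugating $\sigma^{2}I$ by a unitary leaves it fixed, the paper merely carrying out the computation entrywise on the coefficients $b_k = \psi_k^{\dagger}Z$ rather than at the level of the full covariance matrix. Your closing caveat---that only the second-order whiteness (uncorrelatedness with equal variance) is preserved, and genuine independence transfers only under a Gaussian hypothesis---is well taken; the paper's argument likewise establishes only $\mathbb{E}[b_k b_p^{*}] = \sigma^{2}\delta_{k,p}$ despite the ``i.i.d.'' phrasing of the statement.
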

\begin{proof}
Let Z be a random vector and its KLT representation can be given by
\begin{equation}
\label{equation 21}
Z = \sum\limits_{k} {a_k} \varphi_{k},
\end{equation}
where $\{{\varphi_{k},  1 \leq k \leq n}\}$ is the orthonormal eigen basis and $\e{a_k{a_l^*}} = \sigma^{2}\,\,\delta_{k,l}\,\,\forall k $. Let Z be represented by another set of orthonormal basis function $( \psi_{k} )$ as given below.  
\begin{equation}
\label{equation 22}
Z = \sum\limits_{k} {b_k} \psi_{k}
\end{equation}
We need to prove $\e{b_k{b_p^*}} = \sigma^{2}\,\,\delta_{k,p}\,\,\forall k $. 
Let us at this point define a linear operator A such that $A \varphi_{k} = \psi_{k} $.  Since  A maps an orthonormal basis $\{{\varphi_{k},  1 \leq k \leq n}\}$ to another orthonormal basis $\{{\psi_{k},  1 \leq k \leq n}\}$ by definition it will be a unitary matrix. 
\begin{equation}
    \label{equation 23}
    b_k = \langle Z, \psi_{k}\rangle=\sum\limits_{n}{Z(n)}\psi_{k}^*(n) = Z^{T}{\psi_{k}^*}
\end{equation}
Since $b_{k}$ is a scalar quantity we can write $b_{k}^{T} = b_k$ i.e.
\begin{equation}
    \label{equation 24}
    b_k = Z^{T}{\psi_{k}^*} = {\psi_{k}^\dagger}Z
\end{equation}
\begin{equation}
\label{equation 25}
\begin{split}
&\e{b_k {b_p^*}} = \e{({\psi_{k}^\dagger}Z) (Z^{\dagger}\psi_{p})} \\
&\e{b_k {b_p^*}} = {\psi_{k}^\dagger} \e{ Z Z^{\dagger}}\psi_{p} \\
&\e{b_k {b_p^*}} = {\varphi_{k}^\dagger} A^\dagger\,\ (\sigma^{2} \delta_{k,p})\,\ A \varphi_{p}\\
&\e{b_k {b_p^*}} =  (\sigma^{2} \delta_{k,p}) \,\ {\varphi_{k}^\dagger} A^\dagger A \varphi_{p}
\end{split}
\end{equation}
Since A is a unitary matrix $AA^{\dagger} = A^{\dagger}A = I$.  
\begin{equation}
\label{equation 26}
\begin{aligned}
 &\e{b_k {b_p^*}} =  \sigma^{2} \delta_{k,p} \,\ {\varphi_{k}^\dagger}\varphi_{p} \\ 
  &\e{b_k {b_p^*}} = \sigma^{2} \delta_{k,p}
\end{aligned}
\end{equation}
\end{proof}
With all the preliminaries established we now discuss the main result of the paper.
\begin{theorem}\renewcommand{\qedsymbol}{}
Let B and A be the covariance matrices of classes $C_{1}$ and $C_{2}$ , respectively, as denoted earlier. Further let L be the whitening operator corresponding to class $C_{1}$.Then $\{{\psi_{k},  1 \leq k \leq n}\}$ is the Karhunan-Loeve basis of LY if and only if $\{{\tilde \psi_{k},  1 \leq k \leq n}\}$) is a solution of the matrix pencil equation.  
\end{theorem}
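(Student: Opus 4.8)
The plan is to recognise that the Karhunen--Loeve basis of $LY$ is nothing but the eigenbasis of its covariance matrix, and then to transport that ordinary eigenvalue problem back to a generalized eigenvalue problem for the pencil $(A,B)$ by ``undoing'' the whitening map $L$. First I would compute the covariance of the transformed class-$C_{2}$ data: since $\mathbb{E}[Y Y^{\dagger}]=A$, linearity of expectation gives $\mathbb{E}[(LY)(LY)^{\dagger}]=L A L^{\dagger}$. Hence $\{\psi_{k}\}$ is the KLT basis of $LY$ precisely when each $\psi_{k}$ is an eigenvector of the Hermitian matrix $L A L^{\dagger}$, i.e. $L A L^{\dagger}\psi_{k}=\mu_{k}\psi_{k}$ for some scalar $\mu_{k}$.

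Two identities established earlier do the heavy lifting. From equation \ref{equation 16} we have $L^{\dagger}L=B^{-1}$, and the very computation carried out in Theorem 2 (equation \ref{equation 20}), applied to $B=\varphi\Lambda\varphi^{\dagger}$ rather than to $A$, gives $L B L^{\dagger}=I$. Because $B$ is positive definite, $L=\Lambda^{-1/2}\varphi^{\dagger}$ is invertible with $L^{-1}L^{-\dagger}=(L^{\dagger}L)^{-1}=B$. I would then fix the correspondence between the two bases by setting $\tilde\psi_{k}=L^{\dagger}\psi_{k}$, equivalently $\psi_{k}=L^{-\dagger}\tilde\psi_{k}$; invertibility of $L$ guarantees this is a bijection, so no solutions are created or destroyed when passing between the two problems.

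For the forward direction I would start from $L A L^{\dagger}\psi_{k}=\mu_{k}\psi_{k}$, substitute $\psi_{k}=L^{-\dagger}\tilde\psi_{k}$, and left-multiply by $L^{-1}$. Using $L^{-1}L^{-\dagger}=B$ on the right-hand side collapses the expression to $A\tilde\psi_{k}=\mu_{k}B\tilde\psi_{k}$, which is exactly the generalized eigenvalue problem $\det(A-\lambda B)=0$ with $\tilde\psi_{k}$ a corresponding pencil solution. For the converse I would run the argument backwards: given $A\tilde\psi_{k}=\lambda_{k}B\tilde\psi_{k}$, substitute $\tilde\psi_{k}=L^{\dagger}\psi_{k}$ and left-multiply by $L$; the identity $L B L^{\dagger}=I$ turns the right-hand side into $\lambda_{k}\psi_{k}$, yielding $L A L^{\dagger}\psi_{k}=\lambda_{k}\psi_{k}$ and hence that $\psi_{k}$ is a KLT eigenvector of $LY$. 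A clean consistency check is that $\det(A-\lambda B)=\det(L^{-1})\,\det(L A L^{\dagger}-\lambda I)\,\det(L^{-\dagger})$, so the pencil eigenvalues and the eigenvalues of $L A L^{\dagger}$ coincide, confirming $\mu_{k}=\lambda_{k}$.

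The step I expect to require the most care is pinning down the precise definition of $\tilde\psi_{k}$ and verifying that the chosen map $\psi_{k}\mapsto L^{\dagger}\psi_{k}$ is the correct one, rather than $L^{-1}\psi_{k}$ or $L\psi_{k}$. The whole equivalence hinges on which of the two identities $L^{\dagger}L=B^{-1}$ and $L B L^{\dagger}=I$ is invoked on each side, and an incorrect pairing would silently produce the pencil $(A,B^{-1})$ or $(A,I)$ instead of $(A,B)$. Everything else is routine matrix algebra, so the main obstacle is bookkeeping the adjoints and inverses consistently rather than any genuine analytic difficulty.
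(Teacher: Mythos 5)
Your proposal is correct and follows essentially the same route as the paper: both identify the KL basis of $LY$ with the eigenvectors of $LAL^{\dagger}$, set $\tilde\psi_{k}=L^{\dagger}\psi_{k}$, and convert between the two eigenproblems using $(L^{\dagger}L)^{-1}=B$ (equivalently $LBL^{\dagger}=I$). The paper carries out both directions at once as a chain of equivalences, whereas you separate the forward and reverse implications and add a determinant consistency check, but the substance is identical.
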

\begin{proof}
Let L be the whitening operator corresponding to class $C_1$ as defined earlier. The KL decomposition of LY can be written as follows:
\begin{equation}
\label{equation 29}
LY = \sum\limits_{k}{c_k}\psi_{k},
\end{equation}
where $\{{c_{k},  1 \leq k \leq n}\}$ are uncorrelated and $\{{\psi_{k},  1 \leq k \leq n}\}$ is the orthonormal eigen basis corresponding to covariance matrix of LY given as:
\begin{equation}
\label{equation 27}
cov_{LY} = E(LYY^{\dagger}L^{\dagger}) \\
 = LAL^{\dagger}.
\end{equation}
Denoting the eigen vector of $LAL^{\dagger}$ corresponding to its eigen value $\mu_{k}$, as  $\psi_{k}$ we can write:  
\begin{equation}
\label{equation 28}
LAL^{\dagger}\psi_{k} = \mu_{k} \psi_{k} 
\end{equation}
\newline Denoting $ \tilde \psi_{k} = L^{\dagger}\psi_{k} $ and algebraically simplifying equation \ref{equation 28}   we obtain:

\begin{equation}
\label{equation 30}
\begin{aligned}
&LAL^{\dagger}\psi_{k} = \mu_{k} \psi_{k} \\
\iff &LA \tilde \psi_{k}  =\mu_{k} \psi_{k} \\
\iff &LA \tilde \psi_{k} = \mu_{k} (L^{\dagger})^{-1}L^{\dagger}\psi_{k} \\
\iff &LA \tilde \psi_{k} = \mu_{k} (L^{\dagger})^{-1}\tilde \psi_{k} \\
\iff &A \tilde \psi_{k} = \mu_{k} L^{-1}(L^{\dagger})^{-1}\tilde \psi_{k} \\
\iff &A \tilde \psi_{k} = \mu_{k} (L^{\dagger}L)^{-1}\tilde \psi_{k} \\
\end{aligned}
\end{equation}
Plugging this result derived in equation \ref{equation 16} in equation \ref{equation 30} we get:
\begin{equation}
\label{equation 31}
A \tilde \psi_{k} = \mu_{k}B\tilde \psi_{k} 
\end{equation}
\end{proof}
At this point it can be noted that $\tilde \psi_{k}$ are orthonormal in the modified inner product.This has been proved by the below stated equations.
\begin{equation}
\label{equation 32}
\begin{aligned}
 \langle\tilde \psi_{k},\tilde \psi_{p}\rangle_{B} &=  \langle\tilde \psi_{k},B \tilde \psi_{p}\rangle \\
 &= \langle L^{\dagger}\psi_{k},B L^{\dagger} \psi_{p}\rangle \\
 &= \langle \psi_{k}, L B L^{\dagger} \psi_{p}\rangle \\
 \end{aligned}   
\end{equation}
Let us evaluate the value of $L B L^{\dagger}$:
\begin{equation}
\label{equation 33}
L B L^{\dagger} = \Lambda^ {-1/2} \varphi^{\dagger}(\varphi \Lambda \varphi^{\dagger})\varphi \Lambda^ {-1/2} = I \\
\end{equation}
\begin{equation}
\label{equation 34}
\begin{aligned}
\langle\tilde \psi_{k},\tilde \psi_{p}\rangle_{B} &= \langle \psi_{k},\psi_{p}\rangle \\
\langle\tilde \psi_{k},\tilde \psi_{p}\rangle_{B} &= \delta_{k,p}
\end{aligned}
\end{equation}
Thus it can be stated that the KL basis of LY i.e. $\{{\psi_{k},  1 \leq k \leq n}\}$ is orthogonal with respect to the standard inner product if an only if the transformed basis $\{{\tilde \psi_{k},  1 \leq k \leq n}\}$, which form the solution to the matrix pencil equation, is orthogonal with respect to the modified inner product.
\par
The above section began with the random feature vectors $X$ and $Y$ of classes $C_{1}$ and $C_{2}$ respectively. The feature vectors of class $C_{1}$ were expressed in terms of its KL basis as given in equation \ref{equation 5}, transforming them into a set of uncorrelated coefficients $\{{a_{k},  1 \leq k \leq n}\}$.  The same feature vector X was then represented in terms of a modified basis $\{{u_{k},  1 \leq k \leq n}\}$ where the coefficient $\{{\tilde a_{k},  1 \leq k \leq n}\}$ have unity variance in all directions. Observe that the basis here is orthogonal but not orthonormal. Since the representation of a vector in terms of a different basis can also be effected by a transformation of space we defined an operator L which would serve the purpose.This also amounts to considering LX rather than X. Since $X$ is transformed to $LX$ which is i.i.d. in the standard eigen basis. By Theorem 3, it would be i.i.d. white in any other orthonormal frame of reference. In particular it would also be i.i.d. white in the KL basis of LY i.e. $\{{\psi_{k},  1 \leq k \leq n}\}$. The coefficients of $LY$ in terms of its KL basis would be uncorrelated but would not have equal variance along all the directions. So the quantification of non uniform variance can be used to measure the differential information of one class over the other. So the variation of $\{{c_{k},  1 \leq k \leq n}\}$ which are the coefficients of projection along the KL basis of $LY$  (equation \ref{equation 29}) over the coefficients of  $\{{\tilde a_{k},  1 \leq k \leq n}\}$ which are uniformly distributed (equation \ref{equation 8}) can provide a metric of extra information of one class with respect to the other. The unequal variance of projection coefficients in different directions over unity variance of projection coefficients would essentially capture the differential information amongst the two classes.
\par In order to validate the theory proposed, we demonstrate through various experimentation's that the eigen vectors of the Matrix Pencil $ A - \lambda B $ quantify the differential information between two classes. It is observed through experimental studies that these eigen vectors, when augmented with the eigen vectors of the reference class ($C_{1}$ with covariance matrix B in this case), classify the two classes with reasonable accuracy. In the next section we present the simulation studies.

\section{Experimentation}
This section presents simulation results to validate the above proposed theory. The simulation study has been divided into three parts (i) Binary classification (ii) Multi-class classification and (iii) Transformation of one pattern to another. Whereas the first two parts are designed to demonstrate the efficacy in classification problem the third part presents a spin off in transforming one pattern to another.
\subsection{Binary Classification using Matrix Pencil}

The experimentation has been carried out to solve the binary classification problem for MNIST (Modified National Institute of Standards and Technology Database). The random feature vectors X and Y, of classes $C_{1}$ and $C_{2}$, respectively, have been obtained by vectorising the input images, and a standard K-Nearest Neighbour (k-NN) classifier has been employed for classification. Let us, for instance, take the matrix pencil having the characteristic equation $(A- \lambda B)x = 0 $ , this would view class $C_2$ with covariance matrix $A$ over and above class $C_{1}$ with covariance matrix $B$, thus class $C_1$ in this case can be viewed as the reference class. In the theory, presented in the above sections, we have proposed that matrix pencil helps quantify the differential information of one class over the other. Thus we propose to consider a modified feature set which would be a set of projection coefficients along the eigen basis of matrix pencil augmented with the eigen vectors of the reference class. The eigen vectors of the matrix pencil are obtained as given in equation \ref{equation 35} - \ref{equation 37}. The feature set, denoted by $ (A-\lambda B; B)$, is a vector comprising of projection coefficients along the eigen vectors of matrix pencil and the reference class. Simulation is also carried on different feature sets i.e. projection coefficients along the eigen vectors of matrix pencil of the form $A - \lambda B$ or $B - \lambda A$. The classification accuracy (in percentage) with different feature sets is tabulated in Table \ref {Table 1}.

\subsection{A note on Simulation Results}
\par In order to make the simulations carried out more explicit the eigen vectors of the matrix pencil are derived below.Let us begin with the characteristic equation of matrix pencil. 
 
\begin{equation}
\label{equation 35}
\begin{aligned}
det(&A-\lambda B) = 0 \\
\iff det(&A - \lambda \phi^{-1}\Lambda\phi) = 0 \\
\iff det(&\phi^{-1}(\phi A \phi^{-1}-\lambda\Lambda)\phi) = 0 \\
\iff det(&\phi^{-1}) det(\phi A \phi^{-1}-\lambda\Lambda) det(\phi) = 0 \\
\end{aligned}
\end{equation}
For simplification we define \(\Lambda =N^{2}\). Since $\phi$ is the eigen vector matrix of class $C_{2}$ $det(\phi) = det(\phi^{-1}) \neq 0 $.Thus we can write:
\begin{equation}
\label{equation 36}
\begin{aligned}
&det(\phi A \phi^{-1}-\lambda N N) = 0 \\
\iff &det(N^{-1} \phi A \phi^{-1} N^{-1} - \lambda I ) = 0 \\
\end{aligned}
\end{equation}
Equation \ref{equation 36} is the characteristic equation of standard Eigen value problem.This implies that there exists a $v$ such that $M v = 0$ where $ M = N^{-1} \phi A \phi^{-1} N^{-1} - \lambda I $ .
\begin{equation}
\label{equation 37}
\begin{aligned}
(N^{-1} \phi A \phi^{-1} N^{-1} - \lambda I )v = 0 \\
\iff N^{-1} \phi A \phi^{-1} N^{-1} v = \lambda v \\
\iff A \phi^{-1} N^{-1}v = \lambda \phi^{-1} N v \\
\end{aligned}
\end{equation}
Let $m = \phi^{-1} N^{-1}v$, substituting this value in \ref{equation 37} we obtain:
\begin{equation}
\begin{aligned}
    &Am = \lambda \phi^{-1} N (N \phi m) \\
    \iff &Am = \lambda (\phi^{-1} \Lambda \phi) m \\
    \iff &Am = \lambda B m \\
    \end{aligned}
\end{equation}
Thus we obtain back the standard matrix pencil equation which proves that $m = \phi^{-1} N^{-1}v$ are the eigen vectors of the Matrix Pencil. 
\begin{table}[h]
\caption{Classification accuracy (in percentage) for binary classification} 
\setlength\tabcolsep{1.5pt} 
\begin{center}
\begin{tabular}{| c | c | c | c | c | c|} 
\hline
\label{Table 1}
 $C_1$ & $C_2$ & $(A-\lambda B;B)$  & $(B-\lambda A;A)$ &  $A-\lambda B$ & $B-\lambda A$\\ \hline
1 & 0 & 99.  90 & 99.  90 & 64.  77 & 90.  50 \\ 
2 & 0 & 99.  35 & 99.  45 & 57.  85 & 72.  16 \\ 
3 & 0 & 99.  90 & 99.  80 & 64.  97 & 65.  17 \\ 
4 & 0 & 99.  80 & 99.  85 & 55.  61 & 80.  73 \\ 
5 & 0 & 99.  46 & 99.  57 & 59.  61 & 62.  77 \\ 
6 & 0 & 99.  38 & 99.  38 & 58.  51 & 51.  50 \\ 
7 & 0 & 99.  95 & 99.  85 & 60.  90 & 87.  75 \\ 
8 & 0 & 99.  13 & 99.  59 & 51.  89 & 68.  57 \\ 
9 & 0 & 99.  30 & 99.  49 & 51.  03 & 79.  94 \\ 
2 & 1 & 99.  35 & 99.  81 & 89.  85 & 81.  96 \\ 
3 & 1 & 99.  81 & 99.  86 & 76.  60 & 85.  55 \\ 
4 & 1 & 99.  62 & 99.  53 & 69.  77 & 88.  62 \\ 
5 & 1 & 99.  85 & 99.  95 & 75.  77 & 86.  98 \\ 
6 & 1 & 99.  71 & 99.  86 & 87.  48 & 84.  85 \\ 
7 & 1 & 98.  61 & 99.  35 & 76.  28 & 80.  44 \\ 
8 & 1 & 99.  76 & 99.  95 & 84.  73 & 89.  24 \\ 
3 & 2 & 99.  46 & 99.  61 & 73.  41 & 72.  87 \\ 
4 & 2 & 99.  95 & 99.  85 & 74.  63 & 86.  59 \\ 
5 & 2 & 99.  95 & 100 & 73.  70 & 70.  22   \\ \hline
\end{tabular}
\end{center}
\end{table}
\begin{table*}[t!]
\caption{Classification accuracy (in percentage) for Multi-class Classification}
\label{Table 2}
\begin{center}
\begin{tabular}{| m{1em} | m{1em} | m{1em} | m{5.3em}  | m{7.5em} | m{9.5em} | m{5.3em}  | m{7.5em} | m{9.5em} | }
  \hline
$C_{1}$ & $C_{2}$ & $C_{3}$ & $A-\lambda (B,C)$ & $[A- \lambda (B,C)]; A$ & $[A-\lambda (B,C)]; (B,C)$ & $B-\lambda (A,C)$ & $[B-\lambda (A,C)]; B$ & $[B-\lambda (A,C)]; (A,C)$\\
\hline
0 & 2 & 8 & 37.27 & 70.06 & 98.66 & 36.63 & 70.06 & 98.66 \\ 
0 & 3 & 5 & 36.05 & 59.33 & 98.16 & 32.75 & 59.33 & 98.16 \\ 
0 & 3 & 6 & 41.75 & 69.40 & 99.32 & 50.20 & 69.40 & 99.32 \\ 
1 & 4 & 8 & 47.78 & 46.00 & 99.19 & 56.20 & 46.00 & 99.19 \\ 
1 & 4 & 9 & 66.44 & 46.26 & 97.98 & 59.09 & 46.26 & 97.98 \\ 
3 & 5 & 8 & 38.14 & 59.42 & 96.49 & 34.91 & 59.42 & 96.49 \\ 
3 & 5 & 9 & 49.64 & 72.00 & 97.32 & 34.76 & 72.00 & 97.32 \\ 
3 & 6 & 7 & 44.76 & 87.72 & 99.43 & 39.19 & 87.72 & 99.43 \\ 
3 & 6 & 8 & 35.15 & 71.04 & 98.20 & 32.22 & 71.04 & 98.20 \\ 
4 & 7 & 9 & 59.16 & 62.24 & 96.99 & 35.64 & 62.24 & 96.98 \\ 
4 & 8 & 9 & 32.21 & 52.51 & 96.76 & 34.94 & 52.51 & 96.76 \\ 
5 & 6 & 7 & 40.69 & 83.46 & 99.34 & 39.23 & 83.46 & 99.34 \\ 
5 & 6 & 8 & 45.01 & 67.88 & 97.84 & 37.89 & 67.88 & 97.84 \\ 
6 & 7 & 8 & 38.75 & 74.90 & 99.19 & 50.57 & 74.90 & 99.19 \\ 
6 & 7 & 9 & 68.41 & 66.51 & 98.50 & 36.53 & 66.51 & 98.49 \\  \hline
\end{tabular}
\end{center}
\end{table*}
\subsection {Multi-class Classification using Matrix Pencil}

The theory for binary classification was extended for the multi-class classification problem.  In a multi-class classification problem let us consider three class $C_{1}, C_{2}, C_{3}$ whose covariance matrices are defined by $A$, $B$ and $C$. The differential information of one class with respect to other classes is found using the eigen vectors of the matrix pencil $A - \lambda (B,C)$ or $B-\lambda (A,C)$ or $C-\lambda (A,B)$.  $A - \lambda (B,C)$ would find the additional information of class $ C_{1} $ whose covariance matrix is given by A with respect to classes $C_{2}$ and $C_{3}$, the sample points of classes  $C_{2}$ and $C_{3}$ are combined, and the eigen vectors of these combined sample points are denoted as $(B, C)$ which can also be treated as the reference class in our case. In order to solve any multi-class classification problem, the eigen vectors of the matrix pencil are augmented with the eigen vectors of the reference class. For example the eigen vectors of the matrix pencil $A-\lambda (B,C)$ are clubbed with the eigen vectors of the reference class $(B,C)$ which is denoted as $[ A-\lambda (B,C)]; (B,C)$. The proposed theory was tested on various types of feature sets which are projection coefficients along $A-\lambda (B,C)$, $[A-\lambda (B,C)]; (B,C)$, $[A-\lambda (B,C)];A$.The results of multi-class classification for various types of feature sets are summarised in Table \ref{Table 2}. This multi-class classification problem can also be solved by the using the feature set composed of projection coefficients along the eigen basis of $[A- \lambda (B,C)]$ augmented with $ B-\lambda C $ and the reference class $C_3$ with covariance matrix $C$, which can be denoted as $A-\lambda (B,C);B-\lambda C;C$. Classification accuracy has been evaluated for the feature set $A-\lambda (B,C);B-\lambda C;C$ and $A-\lambda (B,C);B-\lambda C$ where in the latter case the projection coefficients along the eigen vectors of the reference class have not been included in the feature set. The results of classification are tabulated in table \ref{Table 3} .

\begin{table}[h]
\caption{Classification accuracy (in percentage) for Multi-class Classification}
\begin{center}
\begin{tabular}{| m{1em} | m{1em} | m{1em} | c | c |}
\hline
\label{Table 3}
$C_{1}$ & $C_{2}$ & $C_{3}$ & $A-\lambda (B,C);B-\lambda C$ & $A-\lambda (B,C);B-\lambda C;C$ \\
\hline
0 & 2 & 4 & 67.20  & 98.80 \\
0 & 4 & 7 & 65.59 & 98.76 \\
1 & 2 & 7 & 78.97 & 95.40 \\
1 & 4 & 5 & 74.74 & 98.77 \\
2 & 3 & 6 & 61.87 & 98.27 \\
2 & 4 & 9 & 60.57 & 96.49 \\
2 & 6 & 7 & 55.43 & 97.42 \\
3 & 4 & 9 & 64.81 & 95.77 \\
3 & 7 & 9 & 40.99 & 97.11 \\ 
\hline
\end{tabular}
\end{center}
\end{table}

\subsection{Transformation of one pattern to another}
This section presents a spin off to the above proposed theory.
The theorem presented in the above section proves that the operator matrix L is a whitening matrix.  Whitening matrices $L_{X}$ and $L_{Y}$ were derived from the classes $C_{1}$ and $C_{2}$.  $L_{X}$ when operated on X would whiten the data point of class 1 as evident from equation \ref{equation 14}. This whitened signal when operated by $L_{Y}^{-1}$, would perform the corresponding coloring operation. It was observed that when operator $L_{Y}^{-1}$ was applied on the whitened signal $L_{X}X$, could transform one pattern corresponding to one class to the pattern corresponding to the second class.  The pattern obtained after performing the transformation of $L_{Y}^{-1}L_{X}X$ is a function of how well $L_{X}$ can whiten X and the generalization of the operator matrix $L_{Y}$ of class $C_{2}$ . Since $L_{X}X$ is a white signal; any pattern can be generated by applying the transformation $L_{Y}^{-1}P$ onto any white signal P such that $L_{Y}$ is the whitening matrix derived from the class of patterns we intend to generate.  The above experiment was validated on MNIST and MPEG-7 Core Experiment CE-Shape-1 database and it was observed that when the feature vector matrices are such that the dimensionality of the feature vector of each sample point is greater than the number of examples the generalization of the operator matrix will not be as good as when the number of examples is greater than the dimensionality of each feature vector.  

\begin{figure}[h!]
\includegraphics[scale=0.5,  width= \linewidth]{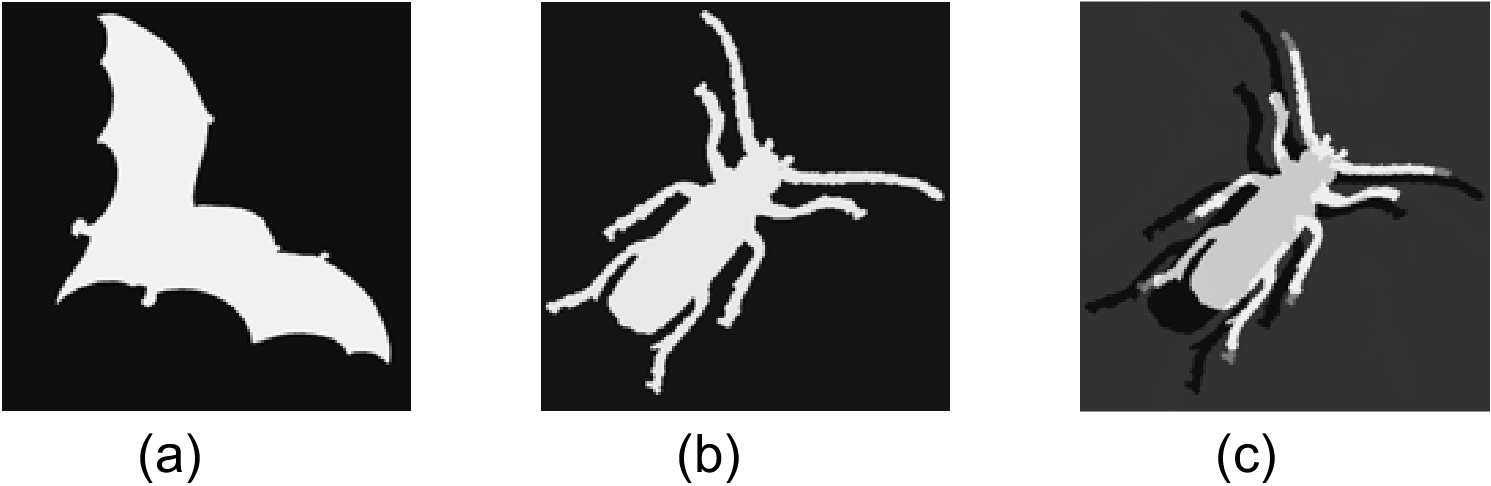}
\caption{(a) Data sample from class 1 (b) Data sample from class 2 (c) Transformation output}
\end{figure}
\section{Conclusions}
In this paper we have presented a theory to find the information that would discern one pattern from another using the characteristic equation of matrix pencil. The efficacy of the proposed scheme has been demonstrated for binary and multi-class classification. And as a spin off the transformation of one pattern to another is also presented.

\end{document}